\def\version{16/01/2013 Version 2\hfill
\href{http://arxiv.org/abs/1207.4947}{arXiv:1207.4947}}
\theoremstyle{plain}
\newtheorem{thm}{Theorem}[section]
\newtheorem{lem}[thm]{Lemma}
\newtheorem{prop}[thm]{Proposition}
\theoremstyle{definition}
\newtheorem{rem}[thm]{Remark}
\numberwithin{equation}{section}
\def\ie{\emph{i.e.}}
\def\:{\colon}
\def\.{\cdot}
\def\<{\left\langle}
\def\>{\right\rangle}
\def\({\left(}
\def\){\right)}
\def\epsilon{\varepsilon}
\def\phi{\varphi}
\def\leq{\leqslant}
\def\geq{\geqslant}
\def\lra{\longrightarrow}
\def\Lra{\Longrightarrow}
\def\ra{\rightarrow}
\def\tilde#1{\widetilde{#1}}
\def\iso{\cong}
\def\A{\mathbb{A}}
\def\C{\mathbb{C}}
\def\F{\mathbb{F}}
\def\H{\mathbb{H}}
\def\k{\Bbbk}
\def\Q{\mathbb{Q}}
\def\N{\mathbb{N}}
\def\R{\mathbb{R}}
\def\Z{\mathbb{Z}}
\def\P{\mathbb{P}}
\def\Times_#1{\mathop{\times}_{#1}}
\def\oTimes_#1{\mathop{\otimes}_{#1}}
\def\oPlus_#1{\mathop{\bigoplus}_{#1}}
\DeclareMathOperator{\Tor}{Tor}
\DeclareMathOperator*{\colim}{colim}
\def\CP{\C P}
\def\CPi{\CP^\infty}
\def\HP{\H P}
\def\HPi{\HP^\infty}
\def\RP{\R P}
\def\RPi{\RP^\infty}
\def\BP{\mathit{BP}}
\def\bp1{\BP\langle 1\rangle}
\def\nsym{\mathsf{NSymm}}
\def\lsc{\Omega \Sigma \CPi}
\def\op{\mathrm{op}}
\def\SS{\mathbb{S}}
\def\TAQ{\mathit{TAQ}}
\def\Sq{\mathit{Sq}}
\def\k{{\boldsymbol{k}}}
\def\Sc{S}
\begin{document}
\title[Some properties of $M\xi$]
{Some properties of the Thom spectrum over loop suspension
of complex projective space}
\author{Andrew Baker \and Birgit Richter}
\address{School of Mathematics \& Statistics, University
of Glasgow, Glasgow G12 8QW, Scotland.}
\email{a.baker@maths.gla.ac.uk}
\urladdr{http://www.maths.gla.ac.uk/$\sim$ajb}
\address{Fachbereich Mathematik der Universit\"at Hamburg,
20146 Hamburg, Germany.}
\email{richter@math.uni-hamburg.de}
\urladdr{http://www.math.uni-hamburg.de/home/richter/}
\subjclass[2010]{primary 05E05, 55P35; secondary 55N15, 55N22, 55Q15}
\thanks{We would like to thank Jack Morava and Nitu Kitchloo
for encouraging us to work out details for some of the loose
ends left over from our earlier paper. We also thank Tyler
Lawson for introducing the first named author to the spectrum
$\tilde{\P}(\Sigma^{\infty-2}\CPi)$ and its universal property.}
\date{\version}

\begin{abstract}
This note provides a reference for some properties of the Thom
spectrum $M\xi$ over $\Omega\Sigma\CPi$.  Some of this material
is used in recent work of Kitchloo and Morava. We determine the
$M\xi$-cohomology of $\CPi$  and show that $M\xi^*(\CPi)$ injects
into power series over the algebra of non-symmetric functions.
We show that $M\xi$ gives rise to a commutative formal group
law over the non-commutative ring $\pi_*M\xi$. We also discuss
how $M\xi$ and some real and quaternionic analogues behave with
respect to spectra that are related to these Thom spectra by
splittings and by maps.
\end{abstract}

\maketitle

\section*{Introduction}

The map $\CPi = BU(1) \ra BU$ gives rise to a canonical loop map
$\lsc \to BU$. Therefore the associated Thom spectrum has a strictly
associative multiplication. But as is visible from its homology, which
is a tensor algebra on the reduced homology of $\CPi$, it is not even
homotopy commutative. This homology ring coincides with the ring of
non-symmetric functions, $\nsym$. We show that there is a map from
the $M\xi$-cohomology of $\CPi$ to the power series over the ring
of non-symmetric functions, $\nsym$. This result is used in~\cite{MK}
in an application of $M\xi$ to quasitoric manifolds.

Although $M\xi$ maps to $MU$, there is no obvious map to it from $MU$,
so a priori it is unclear whether there is a formal group law associated
to $M\xi$. However, analogues of the classical Atiyah-Hirzebruch spectral
sequence calculations for $MU$ can be made for $M\xi$, and these show
that there is a `commutative formal group' structure related to
$M\xi^*\CPi$, even though the coefficient ring $M\xi_*$ is not
commutative and its elements do not commute with the variables coming
from the choices of complex orientations. A formal group law in this
context is an element $F(x,y) \in M\xi^*(\CPi\times \CPi)$ that
satisfies the usual axioms for a commutative formal group law. We
describe the precise algebraic structure arising here in
Section~\ref{sec:FGL/Mxi}.

For $MU$ the $p$-local splitting gives rise to a map of ring
spectra $BP \ra MU_{(p)}$. We show that despite the fact that
$M\xi_{(p)}$ splits into (suspended) copies of $BP$, there
is no map of ring spectra $BP \ra M\xi_{(p)}$. For the
canonical Thom spectrum over $\Omega \Sigma \RPi$, $M\xi_\R$,
we show that the map of $E_2$-algebra spectra $H\F_2 \ra MO$
does not give rise to a ring map $H\F_2\ra M\xi_\R$.

For a map of ring spectra $MU\ra E$ to some commutative $S$-algebra
$E$ one can ask whether a map of commutative $S$-algebras
$S \wedge_{\P(\Sc)}\P(\Sigma^{\infty-2}\CPi)\ra E$ factors over~$MU$.
Here $\P(-)$ denotes the free commutative $S$-algebra functor.
It is easy to see that $MU$ is not equivalent to
$S \wedge_{\P(\Sc)}\P(\Sigma^{\infty-2}\CPi)$. We show that
there are commutative $S$-algebras for which that is not the case.
In the associative setting, the analoguous universal gadget would
be $S \amalg_{\A(S)} \A(\Sigma^{\infty-2}\CPi)$, where $\A(-)$ is
the free associative $S$-algebra functor and $\amalg$ denotes the
coproduct in the category of associative $S$-algebras. It is obvious
that the homology of $S\amalg_{\A(S)}\A(\Sigma^{\infty-2}\CPi)$
is much bigger than the one of $M\xi$. If we replace the coproduct
by the smash product, there is still a canonical map
$S \wedge_{\A(S)} \A(\Sigma^{\infty-2}\CPi) \ra M\xi$ due to the
coequalizer property of the smash product. However, this smash
product still has homology that is larger than that of~$M\xi$.
Therefore the freeness of $\lsc$ is not reflected on the level
of Thom spectra.

\section{The Thom spectrum of $\xi$}\label{sec:ThomSpectrum-xi}

Lewis showed \cite[Theorem IX.7.1]{LMS} that an $n$-fold loop
map to $BF$ gives rise to an $E_n$-structure on the associated
Thom spectrum. Here $E_n$  is the product of the little $n$-cubes
operad with the linear isometries operad. For a more recent
account in the setting of symmetric spectra see work of Christian
Schlichtkrull~\cite{Sch}.

The map $j\:\lsc \ra BU$ is a loop map and so the Thom spectrum
$M\xi$ associated to that map is an $A_\infty$ ring spectrum and
the natural map $M\xi \ra MU$ is one of $A_\infty$ ring spectra,
or equivalently of $S$-algebras in the sense of~\cite{EKMM}. Since
the homology $H_*(M\xi)$ is isomorphic as a ring to $H_*(\lsc)$,
we see that $M\xi$ is not even homotopy commutative. We investigated
some of the properties of $M\xi$ in~\cite{BR:qsym}.

For any commutative ring $R$, under the Thom isomorphism
$H_*(\lsc;R)\iso H_*(M\xi;R)$, the generator $Z_i$ corresponds
to an element $z_i \in H_{2i}(M\xi;R)$, where we set $z_0=1$.
Thomifying the map $i\:\CPi \ra \lsc$, we obtain a map
$Mi\: MU(1) \ra \Sigma^2M\xi$, and it is easy to see
that
\begin{equation}\label{eqn:beta-z}
Mi_*\beta_{i+1}=z_i.
\end{equation}

\subsection{Classifying negatives of bundles}\label{sec:NegBdles}

For every based space $X$, time-reversal of loops is
a loop-map from $(\Omega X)^\op$ to $\Omega X$, \ie,
\begin{equation*}
\bar{(\; .\; )} \:(\Omega X)^\op \ra \Omega X;
\quad
w \mapsto \bar{w},
\end{equation*}
where $\bar{w}(t) = w(1-t)$. Here $(\Omega X)^\op$ is the
space of loops on $X$ with the opposite multiplication of
loops.

We consider $BU$ with the $H$-space structure coming from
the Whitney sum of vector bundles and denote this space by
$BU_\oplus$. A complex vector bundle of finite rank on a
reasonable space $Y$ is represented by a map $f\:Y\ra BU$
and the composition
\begin{equation*}
\xymatrix@1{
Y \ar[r]^f & BU_\oplus^\op \ar[r]^{\bar{(\;.\;)}} & BU_\oplus
}
\end{equation*}
classifies the negative of that bundle, switching the r\^ole
of stable normal bundles and stable tangent bundles for smooth
manifolds.

For line bundles $g_i\: Y \ra \CPi \ra BU_\oplus$ ($i=1,\ldots,n$)
we obtain a map
\begin{equation*}
g=(g_n,\ldots,g_1)\: Y \ra Y^n \ra(\CPi)^n
                \ra (\Omega\Sigma\CPi)^\op\ra BU_\oplus^\op
\end{equation*}
and the composition with loop reversal classifies the negative
of the sum $g_n \oplus \cdots \oplus g_1$ as indicated in the
following diagram.

\bigskip
\begin{equation*}
\xymatrix{
Y \ar[r] \ar@/_3pc/[drrr]_{\bar{g}}\ar@/^22pt/[rrr]^{g}
 & {Y^n} \ar[r]
 & {(\Omega\Sigma\CPi)^\op} \ar[r] \ar[d]^{\bar{(\;.\;)}}
 & {BU_\oplus^\op} \ar[d]^{\bar{(\;.\;)}}  \\
& & {\Omega\Sigma\CPi} \ar[r] & {BU_\oplus}
}
\end{equation*}

\bigskip
\noindent
In this way, the splitting of the stable tangent bundle
of a toric manifold into a sum of line bundles can be
classified by $\lsc$. For work on an interpretation of
$\pi_*M\xi$ as the habitat for cobordism classes of
quasitoric manifolds see~\cite{MK}.

\section{$M\xi$-(co)homology} \label{sec:mxistern}

We note that the composition of the natural map $i\:\CPi\ra\lsc$
with $j\: \lsc \ra BU$ classifies the reduced line bundle $\eta-1$
over $\CPi$. The associated map
$Mi\: \Sigma^\infty MU(1) \ra \Sigma^2 M\xi$ gives a distinguished
choice of complex orientation $x_\xi\in \widetilde{M\xi}^2(\CPi)$,
since the zero-section $\CPi\ra MU(1)$ is an equivalence.

We use the Atiyah-Hirzebruch spectral sequence
\begin{equation}\label{eqn:ahss}
\mathrm{E}_2^{*,*}= H^*(\CPi;M\xi^*) \Lra M\xi^*(\CPi).
\end{equation}
As $M\xi$ is an associative ring spectrum, this spectral
sequence is multiplicative and its $\mathrm{E}_2$-page
is $\Z[[x]]\otimes M\xi^*$. As the spectral sequence
collapses, the associated graded is of the same form
and we can deduce the following:
\begin{lem}\label{lem:MxiCPi}
As a left $M\xi^*=M\xi_{-*}$-module we have
\begin{equation}\label{eqn:MxiCPi-LH}
M\xi^*(\CPi) = \{ \sum_{i \geq 0} a_i x_\xi^i:a_i \in M\xi_* \}.
\end{equation}
\end{lem}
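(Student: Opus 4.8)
The plan is to run the Atiyah--Hirzebruch spectral sequence \eqref{eqn:ahss} carefully and keep track of the multiplicative structure. First I would recall that $H^*(\CPi;\Z) = \Z[[x]]$ as a power series ring on the canonical degree-$2$ class $x$ (the completion arising because $\CPi$ is an infinite complex), so the $\E_2$-page is $H^*(\CPi;\Z)\otimes_\Z M\xi^* = \Z[[x]]\otimes_\Z M\xi^*$, which as a left $M\xi^*$-module consists of all sums $\sum_{i\geq 0}a_ix^i$ with $a_i\in M\xi^*$. The next step is to argue that all differentials vanish: since $M\xi$ carries a complex orientation $x_\xi\in\widetilde{M\xi}^2(\CPi)$ (constructed just above the lemma from the Thom class of $Mi$), the class $x\in\E_2^{2,0}$ is a permanent cycle because it is detected by $x_\xi$; as the whole $\E_2$-page is generated multiplicatively over $M\xi^*$ by $x$, and the spectral sequence is multiplicative ($M\xi$ being an associative ring spectrum), every element is a permanent cycle and hence $\E_2 = \E_\infty$.

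Having collapse, the associated graded of $M\xi^*(\CPi)$ for the AHSS filtration is $\Z[[x]]\otimes M\xi^*$. The remaining work is to promote this to an honest identification of the left $M\xi^*$-module $M\xi^*(\CPi)$ with $\{\sum_{i\geq0}a_ix_\xi^i : a_i\in M\xi_*\}$. Here I would use the complex orientation directly: the powers $x_\xi^i\in M\xi^{2i}(\CPi)$ filter correctly (in AHSS filtration $2i$) and reduce to $x^i$ on the associated graded, so the elements $\{x_\xi^i\}_{i\geq0}$ give a compatible set of module generators. A completeness/convergence argument — the AHSS filtration on $M\xi^*(\CPi)$ is complete and Hausdorff because the filtration degrees go to infinity in each fixed total degree, so $M\xi^*(\CPi) = \lim_n M\xi^*(\CPi)/F^n$ — then shows that every element has a unique expression $\sum_{i\geq0}a_ix_\xi^i$ with $a_i\in M\xi_{-*+\text{(appropriate shift)}}$, i.e.\ in $M\xi_*$ after accounting for the degree of $x_\xi$. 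This is the standard ``complex orientation forces a polynomial/power-series answer'' argument, here over a noncommutative coefficient ring, and the only subtlety worth flagging is that one must work with left modules consistently and not assume the $a_i$ commute with $x_\xi$.

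The main obstacle I anticipate is purely bookkeeping rather than conceptual: because $M\xi_*$ is noncommutative, I cannot invoke the usual ``$E^*(\CPi)=E^*[[x]]$'' citation for complex-oriented ring spectra wholesale, and I must check that the formal manipulation — choosing generators $x_\xi^i$, expanding, comparing to the associated graded, and concluding uniqueness of coefficients — goes through one-sidedly. Concretely, the step that needs care is showing that if $\sum_{i\geq0}a_ix_\xi^i = 0$ then all $a_i=0$: this follows by induction on AHSS filtration, reading off $a_0$ from the filtration-$0$ quotient, subtracting, and repeating, using that $x_\xi$ has filtration exactly $2$ and reduces to the polynomial generator $x$ which is a nonzerodivisor in $\Z[[x]]\otimes M\xi^*$ as a left module. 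Once that is in place the lemma follows, and the injection into power series over $\nsym$ promised in the introduction will come in the subsequent result by mapping these coefficients $a_i\in M\xi_*\cong H_*(M\xi)=\nsym$ compatibly.
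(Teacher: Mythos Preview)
Your proof is correct and follows exactly the paper's approach: the paper simply records that the Atiyah--Hirzebruch spectral sequence is multiplicative with $\mathrm{E}_2 = \Z[[x]]\otimes M\xi^*$ and collapses, so the associated graded has the stated form; you have written out the details (why $x$ is a permanent cycle, lifting to the powers $x_\xi^i$, uniqueness of coefficients via the filtration) that the paper leaves implicit.

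One correction to your closing aside, which is not part of the proof proper but is worth fixing: it is \emph{not} true that $M\xi_* \cong H_*(M\xi) = \nsym$. The Hurewicz map $M\xi_* \to H_*(M\xi)$ is only injective, and the monomorphism of Proposition~\ref{prop:MxiCPi-NSymm} is obtained by applying the ring map $M\xi \to H\wedge M\xi$ (induced by the unit of $H$) to $M\xi^*(\CPi)$, not by identifying coefficient rings.
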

The filtration in the spectral sequence~\eqref{eqn:ahss}
comes from the skeleton filtration of $\CPi$ and corresponds
to powers of the augmentation ideal $\tilde{M\xi}^*(\CPi)$
in $M\xi^*(\CPi)$. Of course the product structure in the
ring $M\xi^*(\CPi)$ is more complicated than in the case
of $MU^*(\CPi)$ since $x_\xi$ is not a central element.

In order to understand a difference of the form
$u x_\xi^k - x_\xi^k u$ with $u\in M\xi_m$ and $k\geq 1$
we consider the cofibre sequence
\begin{equation*}
\Sigma^m\CP^{k-1} \subseteq \Sigma^m\CP^{k}\ra\Sigma^m\SS^{2k}.
\end{equation*}
Both elements $u x_\xi^k$ and $x_\xi^k u$ restrict to the
trivial map on $\Sigma^m\CP^{k-1}$. The orientation $x_\xi$
restricted to $\SS^2$ is the $2$-fold suspension of the unit
of $M\xi$, $\Sigma^2i\in M\xi^2(\SS^2)$. Centrality of the
unit ensures that the following square and outer diagram
\begin{equation*}
\xymatrix{
{\Sigma^m S \wedge (\Sigma^2 S)^{\wedge k}}
\ar[rrr]^{u \wedge (\Sigma^2i)^{\wedge k}} \ar[dd]_{\mathrm{twist}}
&&& {M\xi \wedge (\Sigma^2M\xi)^{\wedge k}}\ar[dd]^{\mathrm{twist}}\ar[dr] & \\
&&&& {\Sigma^{2k}M\xi} \\
{(\Sigma^2 S)^{\wedge k} \wedge \Sigma^m S}
\ar[rrr]^{(\Sigma^2i)^{\wedge k} \wedge u}
&&& {(\Sigma^2M\xi)^{\wedge k} \wedge M\xi}
\ar[ur]
}
\end{equation*}
commute, so the difference $u x_\xi^k - x_\xi^k u$ is trivial.
his yields
\begin{lem}\label{lem:difference}
For every $u\in M\xi_m$ and $k\geq 1$, $u$ and $x_\xi^k$
commute up to elements of filtration at least $2k+2$, \ie,
\begin{equation}\label{eqn:MxiCPi-commutator}
u x_\xi^k - x_\xi^k u \in (\tilde{M\xi}^*(\CPi))^{[2k+2]}.
\end{equation}
\end{lem}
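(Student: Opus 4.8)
The plan is to reduce the claim to a computation on the $2k$-skeleton of $\CPi$. In the skeleton filtration of~\eqref{eqn:ahss} the subgroup $(\tilde{M\xi}^*(\CPi))^{[2k+2]}$ is exactly the kernel of restriction to the $(2k+1)$-skeleton of $\CPi$, which is $\CP^k$; so it is enough to show that $ux_\xi^k$ and $x_\xi^k u$ restrict to the same class in $\widetilde{M\xi}^*(\CP^k)$. Writing $u\in M\xi_m=M\xi^{-m}$, both classes lie in $\widetilde{M\xi}^{2k-m}(\CP^k)\iso\widetilde{M\xi}^{2k}(\Sigma^m\CP^k)$, which is the target of the maps in the diagram preceding the statement.

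First I would note that $x_\xi$ has filtration $2$, as it restricts trivially to $\CP^0$; since the skeleton filtration is multiplicative, $x_\xi^k$ then has filtration at least $2k$ and restricts trivially to $\CP^{k-1}$, which has dimension $2k-2$. Hence $ux_\xi^k$ and $x_\xi^k u$ both restrict trivially to $\Sigma^m\CP^{k-1}$, and the cofibre sequence $\Sigma^m\CP^{k-1}\subseteq\Sigma^m\CP^k\to\Sigma^m\SS^{2k}$ shows that each factors through the collapse map $q\colon\Sigma^m\CP^k\to\Sigma^m\SS^{2k}$. To identify these factorisations I would use that $x_\xi$ is the distinguished complex orientation: it restricts on $\CP^1=\SS^2$ to the canonical generator, the double suspension of the unit. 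Since the Atiyah--Hirzebruch spectral sequence for $\CP^k$ is multiplicative and collapses, this forces $x_\xi^k|_{\CP^k}=q^*\iota_{2k}$, where $\iota_{2k}\in\widetilde{M\xi}^{2k}(\SS^{2k})$ is the fundamental class, \ie\ the $2k$-fold suspension of the unit of $M\xi$. Indeed both classes lie in the top filtration $F^{2k}$; as $\CP^k$ has no cells above dimension $2k$ this subgroup is isomorphic to the associated graded group $E_\infty^{2k,*}=H^{2k}(\CP^k;M\xi^*)$, and both classes map to the canonical generator there.

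Since restriction along $q$ is a map of $M\xi^*$-bimodules, we get $ux_\xi^k|_{\CP^k}=q^*(u\cdot\iota_{2k})$ and $x_\xi^k u|_{\CP^k}=q^*(\iota_{2k}\cdot u)$; equivalently, identifying $\Sigma^m\SS^{2k}$ with $\Sigma^m S\wedge(\Sigma^2 S)^{\wedge k}$ via the reduced diagonal of $\CP^k$ followed by the projection onto bottom cells, these two maps $\Sigma^m\SS^{2k}\to\Sigma^{2k}M\xi$ are the top and bottom composites of the displayed diagram. As $\iota_{2k}$ is a suspension of the unit of $M\xi$, centrality of the unit yields $u\cdot\iota_{2k}=\iota_{2k}\cdot u$; on the level of spectra this is the commutativity of the twist square together with the two multiplication triangles, the legs out of $M\xi\wedge(\Sigma^2M\xi)^{\wedge k}$ and $(\Sigma^2M\xi)^{\wedge k}\wedge M\xi$ agreeing after the twist because the factor $(\Sigma^2i)^{\wedge k}$ is the same on both sides. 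Therefore $ux_\xi^k$ and $x_\xi^k u$ restrict to the same class on $\CP^k$, which is~\eqref{eqn:MxiCPi-commutator}.

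The step I expect to require the most care is the identification $x_\xi^k|_{\CP^k}=q^*\iota_{2k}$ together with the bimodule compatibility of $q^*$: one has to check that the complex-orientation bookkeeping familiar from $MU$ carries over verbatim to the non-commutative ring spectrum $M\xi$, the only structural input needed beyond associativity being centrality of the unit --- which is precisely what makes the two a priori distinct factorisations through $\Sigma^m\SS^{2k}$ coincide.
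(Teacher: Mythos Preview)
Your proof is correct and follows essentially the same route as the paper: restrict to $\CP^k$, use the cofibre sequence $\Sigma^m\CP^{k-1}\hookrightarrow\Sigma^m\CP^k\to\Sigma^m\SS^{2k}$ to reduce to the top cell, and invoke centrality of the unit to identify the two factorisations through $\Sigma^m\SS^{2k}$. You have simply made explicit two points the paper leaves implicit in its diagram, namely the identification $x_\xi^k|_{\CP^k}=q^*\iota_{2k}$ and the bimodule naturality of $q^*$.
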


Let $E$ be any associative $S$-algebra with an orientation
class $u_E \in E^2(\CPi)$. The Atiyah-Hirzebruch spectral
sequence for $E^*\CPi$ identifies $E^*(\CPi)$ with the left
$E_*$-module of power series in $u_E$ as in the case of
$M\xi$:
\begin{equation*}
E^*(\CPi) = \{ \sum_{i \geq 0} \theta_i u_E^i : \theta_i \in E_*\}.
\end{equation*}
The orientation class $u_E \in E^2(\CPi)$ restricts to the
double suspension of the unit of $E$, $\Sigma^2i_E\in E^2(\CP^1)$.
Induction on the skeleta shows that for all $n$, $E_*(\CP^n)$
is free over $E^*$ and we obtain that
\begin{equation*}
E_*(\CPi)\cong E_*\{\beta_0,\beta_1,\ldots\}
\end{equation*}
with $\beta_i \in E_{2i}(\CPi)$ being dual to $u_E^i$.
Let $\phi\:M\xi \ra H \wedge M\xi$ be the map
induced by the unit of $H$, and let
$\Theta\: M\xi^*(\CPi)\ra(H\wedge M\xi)^*(\CPi)$
be the induced ring homomorphism (in fact $\Theta$ is
a monomorphism as explained below). Then
\begin{equation}\label{eqn:Mxi-Exp}
\Theta(x_\xi)  = \sum_{i \geq 0}z_i x_H^{i+1} = z(x_H),
\end{equation}
where $x_H \in (H \wedge M\xi)^2(\CPi)$ is the orientation
coming from the canonical generator of $H^2(\CPi)$. The
proof is analogous to that for $MU$ in~\cite{Ad-1}. Note that
$H\wedge M\xi$ is an algebra spectrum over the commutative
$S$-algebra $H$ which acts centrally on $H \wedge M\xi$. Hence
$x_H$ is a central element of $(H \wedge M\xi)^*(\CPi)$. This
contrasts with the image of $x_\xi$ in $(H \wedge M\xi)^*(\CPi)$
which does not commute with all elements of $(H \wedge M\xi)_*$.

We remark that the cohomology ring $M\xi^*(\CPi)$ is highly
non-commutative. Using~\eqref{eqn:Mxi-Exp}, and noting that
coefficient $z_i\in H_{2i}(M\xi)$ is an indecomposable in
the algebra $H_*(M\xi)$, it follows that $x_\xi$ does not
commute with any of the $z_i$. For example, the first
non-trivial term in the commutator
\[
z_1 \,z(x_H) - z(x_H)\,z_1
\]
is $(z_1z_2-z_2z_1)x_H^3\neq0$.

Let $\nsym$ denote the ring of non-symmetric functions.
This ring can be identified with $H_*(\Omega\Sigma\CPi)$.
Using this and the above orientation we obtain
\begin{prop}\label{prop:MxiCPi-NSymm}
The map $\Theta$ induces a monomorphism
$\Theta\: M\xi^*(\CPi) \ra \nsym[[x_H]]$.
\end{prop}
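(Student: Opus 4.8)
The plan is to exhibit $\Theta\colon M\xi^*(\CPi)\to\nsym[[x_H]]$ explicitly and then prove injectivity by a filtration argument. First I would identify the target: since $H\wedge M\xi$ is an $H$-algebra with $H_*(M\xi)\cong\nsym$, and since $x_H$ is a central orientation class for $H\wedge M\xi$, the Atiyah--Hirzebruch computation (as in the discussion preceding the proposition, following \cite{Ad-1}) gives $(H\wedge M\xi)^*(\CPi)\cong\bigl(H_*(M\xi)\bigr)^*[[x_H]] = \nsym[[x_H]]$ as a ring, with $x_H$ central and the coefficient ring $\nsym$ acting on the left. The ring map $\Theta$ is then the composite of the evident map $M\xi^*(\CPi)\to(H\wedge M\xi)^*(\CPi)$ induced by the unit $\phi\colon M\xi\to H\wedge M\xi$ with this identification; on the distinguished orientation it sends $x_\xi\mapsto z(x_H)=\sum_{i\ge0}z_i x_H^{i+1}$ by~\eqref{eqn:Mxi-Exp}.

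Next I would set up the two filtrations to be compared. On the source, $M\xi^*(\CPi)$ carries the skeletal (augmentation-ideal) filtration by powers of $\tilde{M\xi}^*(\CPi)$; by Lemma~\ref{lem:MxiCPi} the associated graded in cohomological filtration $2i$ is a free left $M\xi_*$-module on $x_\xi^i$. On the target, $\nsym[[x_H]]$ carries the $x_H$-adic filtration, with associated graded in degree $i$ a free module on $x_H^i$ over $\nsym$, i.e.\ over $H_*(M\xi)$. The map $\phi_*\colon M\xi_*\to(H\wedge M\xi)_*=H_*(M\xi)$ is the Hurewicz map. The crucial point is that $\Theta$ is compatible with these filtrations: since $z(x_H)=z_0 x_H+\text{(higher)}$ with $z_0=1$ a unit, one has $\Theta(x_\xi)\equiv x_H$ modulo $x_H^2$, so $\Theta$ carries skeletal filtration $\ge 2i$ into $x_H$-adic filtration $\ge i$ and induces on associated gradeds the map $x_\xi^i\otimes\text{coeff}\mapsto x_H^i\otimes(\text{Hurewicz of coeff})$.

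With this in hand, the proof reduces to: (i) $\Theta$ is complete and continuous for the two filtrations, so a nonzero element of the kernel would have a nonzero image in some associated-graded piece; and (ii) the induced map on associated gradeds, which in filtration degree $2i$ is $M\xi_*\to H_*(M\xi)$ on coefficients (tensored up by $x_H^i$), is injective. Step (ii) is exactly the statement that the Hurewicz homomorphism $\pi_*M\xi\to H_*(M\xi)$ is a monomorphism. This holds because $M\xi$ is, $p$-locally for each $p$ and rationally, a wedge of suspensions of $BP$ (respectively of $H\Z$), and $\pi_*BP\to H_*(BP)$ is split injective (the standard fact $\pi_*BP=\Z_{(p)}[v_1,v_2,\dots]\hookrightarrow\F_p$-free $H_*(BP)$ is not literally split, but integrally $\pi_*BP\to H_*BP;\Z_{(p)})$ \emph{is} injective since $H_*(BP;\Z_{(p)})$ is torsion-free and the map is rationally an isomorphism onto a summand); assembling over all primes gives injectivity of $\pi_*M\xi\to H_*(M\xi)$.

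The main obstacle is step (ii), the injectivity of the Hurewicz map for $M\xi$: one must be careful that $M\xi_*$ is not a polynomial ring and that the splitting of $M\xi$ into copies of $BP$ is only additive, not multiplicative, so the argument has to be made at the level of homotopy and homology groups (as graded abelian groups) rather than rings. Once injectivity of $\pi_*M\xi\to H_*(M\xi)$ is granted, the filtration/associated-graded comparison is formal: $\Theta$ is a filtered map between complete filtered modules whose associated graded is injective, hence $\Theta$ itself is injective.
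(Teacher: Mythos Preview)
Your proof is correct and follows the same strategy as the paper's: both rely on the $p$-local splitting of $M\xi$ into suspensions of $BP$, together with rational injectivity, to conclude that $\Theta$ is injective. The paper's proof is extremely terse (three lines), simply asserting that the target is $(H\wedge M\xi)^*(\CPi)$ and that the $BP$ splitting plus rational injectivity gives the result; you have unpacked this by making the reduction explicit via the skeletal/$x_H$-adic filtration comparison, so that the question becomes injectivity of the Hurewicz map $\pi_*M\xi\to H_*(M\xi)$ on coefficients. This is a genuine clarification of what the paper leaves implicit.

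One minor comment: your parenthetical about $BP$ is slightly garbled (you mention ``$\F_p$-free $H_*(BP)$'' and then correct course to $H_*(BP;\Z_{(p)})$, and the rational Hurewicz map for $BP$ is actually an isomorphism, not merely an isomorphism onto a summand). The cleanest statement is: $\pi_*BP$ is torsion-free, $H_*(BP;\Z_{(p)})$ is torsion-free, and the rational Hurewicz map is an isomorphism, hence the integral Hurewicz map for $BP$ is injective. Assembling over all primes then works because $\pi_*M\xi$ is torsion-free, so an element killed by $\Theta$ is rationally zero and therefore zero. None of this affects the validity of your argument.
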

\begin{proof}
The right-hand side is isomorphic to the
$H\wedge M\xi$-cohomology of $\CPi$. As $M\xi$ is
a wedge of suspensions of $BP$ at every prime and
as the map is also rationally injective, we obtain
the injectivity of $\Theta$.
\end{proof}

Note that for any $\lambda\in M\xi_*$ we can express
$\Theta(x_\xi\lambda)$ in the form
\begin{equation*}
\sum_{i \geq 0}z_i x_H^{i+1} \lambda
             = \sum_{i\geq 0} z_i \lambda x_H^{i+1},
\end{equation*}
but as the coefficients are non-commutative, we cannot
pass $\lambda$ to the left-hand side, so care has to
be taken when calculating in $\nsym[[x_H]]$.

\section{A formal group law over $M\xi_*$}\label{sec:FGL/Mxi}

The two evident line bundles $\eta_1,\eta_2$ over $\CPi \times \CPi$
can be tensored together to give a line bundle $\eta_1 \otimes\eta_2$
classified by a map $\mu\:\CPi \times \CPi \ra \CPi$ and by naturality
we obtain an element $\mu^*x_\xi \in M\xi^2(\CPi\times\CPi)$. We
also have
\begin{equation}\label{eqn:MxiCPiCPi-LH}
M\xi^*(\CPi \times \CPi) =
\biggl\{
     \sum_{i,j \geq 0} a_{i,j} (x'_\xi)^i(x''_\xi)^j
                                 : a_{i,j} \in M\xi_*
                                 \biggr\}
\end{equation}
as a left $M\xi^* = M\xi_{-*}$-module, where
$x'_\xi,x''_\xi \in M\xi^2(\CPi \times\CPi)$ are obtained
by pulling back $x_\xi$ along the two projections. We have
\begin{equation*}
\mu^*x_\xi = F_\xi(x'_\xi,x''_\xi) =
x'_\xi + x''_\xi + \sum_{i,j \geq 1} a_{i,j} (x'_\xi)^i(x''_\xi)^j,
\end{equation*}
where $a_{i,j} \in M\xi_{2(i+j)-2}$. The notation $F_\xi(x'_\xi,x''_\xi)$
is meant to suggest a power series, but care needs to be taken
over the use of such notation. For example, since the tensor
product of line bundles is associative up to isomorphism,
the formula
\begin{subequations}\label{eqn:Mxi-FGL}
\begin{equation}\label{eqn:Mxi-FGLassoc}
F_\xi(F_\xi(x'_\xi,x''_\xi),x'''_\xi) = F_\xi(x'_\xi,F_\xi(x''_\xi,x'''_\xi))
\end{equation}
holds in $M\xi^*(\CPi \times \CPi \times \CPi)$, where
$x'_\xi,x''_\xi,x'''_\xi$ denote the pullbacks of $x_\xi$
along the three projections. When considering this formula,
we have to bear in mind that the inserted expressions do
not commute with each other or coefficients. We also have
the identities
\begin{align}
\label{eqn:Mxi-FGL0}
F_\xi(0,x_\xi) &= x_\xi = F_\xi(0,x_\xi), \\
\label{eqn:Mxi-FGLcomm}
F_\xi(x'_\xi,x''_\xi) &= F_\xi(x''_\xi,x'_\xi).
\end{align}
Let $\bar{x}_\xi=\gamma^*x_\xi$ denotes the pullback of
$x_\xi$ along the map $\gamma\:\CPi\ra\CPi$ classifying
the inverse $\eta^{-1}=\bar{\eta}$ of the canonical line
bundle $\eta$. Then $\bar{x}_\xi\in M\xi^2(\CPi)$ and
there is a unique expansion
\begin{equation*}
\bar{x}_\xi = -x_\xi + \sum_{k\geq1} c_k x_\xi^{k+1}
\end{equation*}
with $c_k\in M\xi_{2k}$. Since $\eta\otimes\bar{\eta}$
is trivial, this gives the identities
\begin{equation*}
F_\xi(x_\xi,\bar{x}_\xi) = 0 = F_\xi(\bar{x}_\xi,x_\xi)
\end{equation*}
and so
\begin{equation}\label{eqn:Mxi-inverse}
F_\xi(x_\xi,-x_\xi + \sum_{k\geq1} c_k x_\xi^{k+1})
= 0
= F_\xi(-x_\xi + \sum_{k\geq1} c_k x_\xi^{k+1},x_\xi).
\end{equation}
\end{subequations}
To summarize, we obtain the following result.
\begin{prop}\label{prop:Mxi-commFGL}
The identities~\eqref{eqn:Mxi-FGL} together show that
$F_\xi(x'_\xi,x''_\xi)$ defines a commutative formal
group law over the non-commutative ring $M\xi_*$.
\end{prop}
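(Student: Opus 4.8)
The plan is to regard this proposition as the place where the notion of a \emph{commutative formal group law over a non-commutative ring} is pinned down, and then to observe that the four families of identities collected in~\eqref{eqn:Mxi-FGL} are exactly the axioms in that sense, each of them having already been derived from a geometric fact about tensor products of line bundles over products of copies of $\CPi$. So most of the argument is bookkeeping; there is a single point that genuinely needs care.

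First I would fix the ambient ring. By Lemma~\ref{lem:MxiCPi} and its extension to products recorded in~\eqref{eqn:MxiCPiCPi-LH}, the ring $M\xi^*((\CPi)^n)$ is, as a left $M\xi_*$-module, power series in the pulled-back orientation classes, and it is complete with respect to the skeletal filtration, which coincides with the filtration by powers of its augmentation ideal. Using this I would \emph{define} a commutative formal group law over $M\xi_*$ to be an element $F(x,y)=x+y+\sum_{i,j\geq1}a_{i,j}x^iy^j$ of this completed ring of power series in two variables that need not commute with the coefficients, subject to a unit axiom, a commutativity axiom, an associativity axiom, and the existence of a formal inverse series — emphasising, as the surrounding discussion already does, that one may not move the $a_{i,j}$ past the variables.

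The main obstacle is making honest sense of the iterated substitutions appearing in~\eqref{eqn:Mxi-FGLassoc} and~\eqref{eqn:Mxi-inverse}, where a power series is fed into an argument of $F_\xi$ despite the variables not being central, so that naive term-by-term rearrangement is illegitimate. The plan here is to note that every such substitution is induced by an honest map of spaces, hence is a (continuous) ring homomorphism between the relevant completed cohomology rings, so the composite expressions are a priori well-defined elements with no rearrangement needed. Concretely, $F_\xi(F_\xi(x'_\xi,x''_\xi),x'''_\xi)$ is the image of $x_\xi$ under $(\mu\o(\mu\times\id))^*$ and the right-hand side of~\eqref{eqn:Mxi-FGLassoc} is its image under $(\mu\o(\id\times\mu))^*$, the two agreeing precisely because $\otimes$ of line bundles is associative up to isomorphism; similarly the series $-x_\xi+\sum_{k\geq1}c_kx_\xi^{k+1}$ entering~\eqref{eqn:Mxi-inverse} is read off from $\gamma^*x_\xi$, and the vanishing statements come from applying $(\mu\o(\gamma\times\id))^*$ and $(\mu\o(\id\times\gamma))^*$ to $x_\xi$, using triviality of $\eta\otimes\bar{\eta}$.

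Finally I would match axioms to identities. The linear part of $\mu^*x_\xi$ is $x'_\xi+x''_\xi$ because $x_\xi$ restricts to the double suspension of the unit on each $\CP^1$-factor; \eqref{eqn:Mxi-FGL0} is the unit axiom, coming from the fact that the trivial line bundle is a unit for $\otimes$; \eqref{eqn:Mxi-FGLcomm} is commutativity, coming from the symmetry of the tensor product of line bundles; \eqref{eqn:Mxi-FGLassoc} is associativity, realised as in the previous paragraph; and \eqref{eqn:Mxi-inverse} exhibits the formal inverse series. (One could alternatively construct the inverse term by term from the unit and associativity axioms as in the classical commutative case, but here it is produced directly by the line bundle $\bar{\eta}$.) Collecting these, $F_\xi(x'_\xi,x''_\xi)$ satisfies all the axioms of a commutative formal group law over the non-commutative ring $M\xi_*$, which is the assertion. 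I do not anticipate any delicate estimates: once substitution is understood as pullback along a map of spaces, completeness of $M\xi^*((\CPi)^n)$ does all the convergence bookkeeping.
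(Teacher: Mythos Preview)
Your proposal is correct and matches the paper's approach: the paper gives no separate proof for this proposition, treating it as a summary of the identities~\eqref{eqn:Mxi-FGL} already derived in the preceding discussion from the geometry of tensor products of line bundles. Your elaboration on interpreting substitution as pullback along maps of spaces, and your matching of each axiom to its geometric source, simply makes explicit what the paper leaves implicit in the text leading up to the proposition.
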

\begin{rem}
Note however, that most of the classical structure theory
for formal group laws over (graded) commutative rings does
\emph{not} carry over to the general non-commutative setting.
For power series rings over associative rings where the
variable commutes with the coefficients most of the theory
works as usual. If the variable commutes with the coefficients
up to a controlled deviation, then the ring of skew power
series still behaves reasonably (see for example~\cite{D}),
but our case is more general.
\end{rem}

\section{The splitting of $M\xi$ into wedges of suspensions
of $BP$}\label{sec:Mxi-BPsplitting}

In \cite{BR:qsym} we showed that there is a splitting of $M\xi$
into a wedge of copies of suspensions of $BP$ locally at each
prime~$p$. In the case of $MU$ the inclusion of the bottom
summand is given by a map of ring spectra $BP \ra MU_{(p)}$.
However, for $M\xi$ this is not the case.
\begin{prop} \label{prop:normaps}
For each prime $p$, there is no map of ring spectra $BP\ra M\xi_{(p)}$
\end{prop}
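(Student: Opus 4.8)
The plan is to argue by contradiction, assuming a ring map $\phi\:BP\ra M\xi_{(p)}$ exists, and derive a contradiction from the structure of $H_*(M\xi;\F_p)$ as computed earlier. First I would recall that $H_*(BP;\F_p)$ is a polynomial algebra on classes $t_i$ in degree $2(p^i-1)$ (and in particular is a polynomial \emph{commutative} algebra), whereas $H_*(M\xi;\F_p)\iso \nsym\otimes\F_p$ is the tensor algebra on the reduced homology of $\CPi$, hence very non-commutative. A ring map $\phi$ would induce an $\F_p$-algebra map $\phi_*\:H_*(BP;\F_p)\ra H_*(M\xi;\F_p)$. The composite with the Thom map $M\xi\ra MU$ must agree, after $p$-localization, with the standard ring map $BP\ra MU_{(p)}$, which is injective on mod-$p$ homology with image the polynomial subalgebra generated by suitable classes; so $\phi_*$ is itself injective on homology, and its image is a commutative polynomial subalgebra of $\nsym\otimes\F_p$ on generators $\phi_*(t_i)$ whose images in $H_*(MU;\F_p)$ are the usual ones.

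Next I would pin down $\phi_*(t_1)$. In degree $2(p-1)$ the class $t_1$ maps in $H_*(MU;\F_p)$ to (a unit multiple of) $m_{p-1}$, i.e.\ to $z_{p-1}$ under the identification via~\eqref{eqn:beta-z} up to decomposables and a nonzero scalar; reading this back into $H_{2(p-1)}(M\xi;\F_p)=\nsym_{2(p-1)}\otimes\F_p$ forces $\phi_*(t_1) = \lambda z_{p-1} + (\text{decomposables})$ with $\lambda\in\F_p^\times$. Since $\phi_*$ is an algebra map and $t_1$ is polynomial, $\phi_*(t_1)^p = \phi_*(t_1^p)$ must be a nonzero element in degree $2p(p-1)$; but I would then show that the $p$-th power of $\lambda z_{p-1}+(\text{dec.})$ in the tensor algebra $\nsym\otimes\F_p$ is forced by the non-commutativity to be a genuinely non-symmetric combination of words, and in particular that its image in $H_*(MU;\F_p)$ (the abelianization) does not match the required target $\phi_*(t_1^p)\mapsto t_1^p\mapsto (\text{unit})\cdot m_{p-1}^p$. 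More robustly: the subalgebra of $\nsym\otimes\F_p$ generated by a single element of the form $\lambda z_{p-1}+(\text{decomposables})$ is a polynomial algebra on that one generator, but its intersection with the degrees where $BP$-homology lives is too small --- concretely, $H_{2(p^2-1)}(BP;\F_p)$ contains $t_2$, an \emph{indecomposable}, and $\phi_*(t_2)$ must be indecomposable in $H_*(M\xi;\F_p)$, hence (by the indecomposables computation: the indecomposables of $\nsym$ in each even degree $2n$ are one-dimensional, spanned by $z_n$) of the form $\mu z_{p^2-1} + (\text{dec.})$ with $\mu\ne 0$; one then checks the pair $(\phi_*(t_1),\phi_*(t_2))$ cannot generate a commutative subalgebra since $z_{p-1}$ and $z_{p^2-1}$ do not commute in $\nsym\otimes\F_p$ (as in the excerpt's observation that $x_\xi$ commutes with no $z_i$, equivalently $z_1z_2\ne z_2z_1$, generalized).

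The cleanest route, and the one I would actually push, is therefore the last one: a ring map $\phi$ forces $\phi_*$ to be an injective map of $\F_p$-algebras from a commutative ring onto a subalgebra of $\nsym\otimes\F_p$, with $\phi_*(t_i)$ an indecomposable in degree $2(p^i-1)$ for each $i$. By the one-dimensionality of the indecomposables of $\nsym$ in each even degree, $\phi_*(t_i)\equiv\lambda_i z_{p^i-1}$ modulo decomposables with $\lambda_i\in\F_p^\times$; but $z_{p-1}$ and $z_{p^2-1}$ fail to commute in $\nsym\otimes\F_p$ modulo decomposables (the commutator $z_{p-1}z_{p^2-1}-z_{p^2-1}z_{p-1}$ is a nonzero sum of length-two words, which is indecomposable-detecting in the appropriate sense), so $\phi_*(t_1)$ and $\phi_*(t_2)$ cannot commute, contradicting commutativity of $BP_*$-homology. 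The main obstacle is making this commutator nonvanishing statement precise: one must verify that for a polynomial generator $P_1=\lambda_1 z_{p-1}+\delta_1$ and $P_2=\lambda_2 z_{p^2-1}+\delta_2$ (with $\delta_i$ decomposable), the bracket $[P_1,P_2]$ is still nonzero in $\nsym\otimes\F_p$ --- i.e.\ that decomposable correction terms cannot cancel the leading non-commutative contribution. This follows by filtering $\nsym\otimes\F_p$ by word length and observing that $[z_{p-1},z_{p^2-1}]$ already lives in word length two and is nonzero there, while the corrections $[\lambda_1 z_{p-1},\delta_2]$, $[\delta_1,\lambda_2 z_{p^2-1}]$, $[\delta_1,\delta_2]$ either contribute to strictly higher word-length filtration or vanish, so no cancellation is possible. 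That resolves the contradiction and proves the proposition.
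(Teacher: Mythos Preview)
Your argument has a genuine gap at the step where you assert that the composite
\[
BP \xrightarrow{\phi} M\xi_{(p)} \ra MU_{(p)}
\]
``must agree, after $p$-localization, with the standard ring map $BP\ra MU_{(p)}$''. There is no uniqueness statement of this kind for homotopy ring maps $BP\ra MU_{(p)}$, and you give no argument for it. Everything downstream---injectivity of $\phi_*$ on mod~$p$ homology and, crucially, the claim that $\phi_*(t_i)$ has nonzero indecomposable part (your $\lambda_i\in\F_p^\times$)---rests on this unjustified identification. Your word-length filtration argument at the end is perfectly fine \emph{once} you know $\lambda_1,\lambda_2\neq0$, but that is exactly what needs proof. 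For $t_1$ one can recover $\lambda_1\neq0$ by a direct Steenrod-operation computation: since $\mathcal{P}^1_*$ is a derivation and kills every $z_i$ with $i<p-1$, any purely decomposable element of degree $2(p-1)$ is annihilated by $\mathcal{P}^1_*$, whereas $\mathcal{P}^1_*\phi_*(t_1)=\phi_*(-1)=-1$. But for $t_2$ there is no analogous one-line argument that $\lambda_2\neq0$; a priori $\phi_*(t_2)$ could be decomposable and still satisfy the comodule constraints, and then your commutator could vanish.

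The paper's proof avoids this problem entirely by working with Steenrod operations rather than with the map to $MU$. It first uses $\mathcal{P}^1_*$ to show $w:=\phi_*(t_1)\neq0$ (not that it is indecomposable---only nonvanishing is needed). Then, instead of trying to control the indecomposable part of $\phi_*(t_2)$, it uses the centralizer structure of a free associative algebra: if $\phi_*(t_2)$ commutes with $w$, then by degree reasons $\phi_*(t_2)$ must be a scalar multiple of $w^{p+1}$. A short computation gives $\mathcal{P}^p_*(w^{p+1})=-w\neq0$, whereas $\mathcal{P}^p_*\phi_*(t_2)=\phi_*(\mathcal{P}^p_*t_2)=0$; so $\phi_*(t_2)$ is \emph{not} a multiple of $w^{p+1}$ and hence does not commute with $w$. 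This replaces your unproven ``$\lambda_2\neq0$'' by a Steenrod-operation obstruction that holds for \emph{any} ring map $\phi$, which is what your approach is missing.
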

\begin{proof}
We give the proof for an odd prime $p$, the case $p=2$ is
similar. We set $H_*=(H\F_p)_*$.

Recall that
\begin{equation*}
H_*(BP) = \F_p[t_1,t_2,\ldots]
\end{equation*}
where $t_r\in H_{2p^r-2}(BP)$ and the $\mathcal{A}_*$-coaction
on these generators is given by
\begin{equation*}
\psi(t_n) = \sum_{k=0}^n \zeta_k\otimes t_{n-k}^{p^k},
\end{equation*}
where $\zeta_r\in\mathcal{A}_{2p^r-2}$ is the conjugate of the
usual Milnor generator $\xi_r$ defined in~\cite{Ad-1}. The right
action of the Steenrod algebra satisfies
\begin{equation*}
\mathcal{P}^1_*t_1 = -1,
   \quad \mathcal{P}^1_*t_2 = -t_1^p,
   \quad \mathcal{P}^p_*t_2 = 0.
\end{equation*}
Assume that a map of ring spectra $u\colon BP\ra M\xi_{(p)}$ exists.
Then $\mathcal{P}^1_*u_*(t_1) = u_*(-1) = -1$, hence $w:= u_*(t_1)\neq 0$.
Notice that
\begin{equation*}
\mathcal{P}^1_*(w^{p+1}) = -w^p, \quad \mathcal{P}^p_*(w^{p+1})=-w.
\end{equation*}
Also, $\mathcal{P}^p_*u_*(t_2)=0$. This shows that $u_*(t_2)$ cannot
be equal to a non-zero multiple of $w^{p+1}$. Therefore it is not
contained in the polynomial subalgebra of $H_*(M\xi_{(p)})$ generated
by $w^{p+1}$ and thus it cannot commute with $w$. This shows that
the image of $u_*$ is not a commutative subalgebra of $H_*M\xi_{(p)}$
which contradicts the commutativity of $H_*BP$.
\end{proof}

\begin{rem}
Note that Proposition \ref{prop:normaps} implies that there
is no map of ring spectra from $MU$ to $M\xi$, because if
such a map existed, we could precompose it $p$-locally with
the ring map $BP \ra MU_{(p)}$ to get a map of ring spectra
$BP \ra M\xi_{(p)}$.
\end{rem}

\section{The real and the quaternionic cases}\label{sec:R&H}
\label{sec:Realcase}

Analogous to the complex case, the map $\RPi = BO(1) \ra BO$
gives rise to a loop map $\xi_\R\:\Omega\Sigma\RPi \ra BO$,
and hence there is an associated map of associative $S$-algebras
$M\xi_\mathbb{R} \ra MO$ on the level of Thom spectra. There
is a splitting of $MO$ into copies of suspensions of $H\F_2$.
In fact a stronger result holds.
\begin{prop} \label{prop:map}
There is a map of $E_2$-spectra $H\F_2 \ra MO$.
\end{prop}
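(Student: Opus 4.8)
The plan is to realise both $H\F_2$ and $MO$ as Thom spectra over $BGL_1(\SS)$ (the classifying space $BF$ of stable spherical fibrations) of maps of $E_2$-spaces, and then to obtain the required morphism by applying the $E_2$-structured Thom spectrum functor --- Lewis's construction from \cite{LMS}, or its symmetric-spectra incarnation in \cite{Sch} --- to a suitable map over $BGL_1(\SS)$.

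By Mahowald's theorem, $H\F_2$ is the Thom spectrum $M\zeta$ of the $E_2$-map $\zeta\colon\Omega^2 S^3\ra BGL_1(\SS)$, where $\Omega^2 S^3=\Omega^2\Sigma^2 S^1$ is regarded as the free grouplike $E_2$-space on $S^1$ and $\zeta$ is the $E_2$-extension of the map $S^1\ra BGL_1(\SS)$ representing the generator of $\pi_1 BGL_1(\SS)=\pi_0 GL_1(\SS)\iso\Z/2$; moreover this presentation can be chosen so as to identify $M\zeta$ with $H\F_2$ as $E_2$-ring spectra, for the usual $E_\infty$ (hence $E_2$) structure. On the other side, $MO$ is the Thom spectrum $MJ$ of the stable $J$-homomorphism $J\colon BO\ra BGL_1(\SS)$, which is an infinite loop map, in particular a map of $E_2$-spaces. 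Now $J$ is an isomorphism on $\pi_1$, since the non-trivial component of $O=O(1)$ is sent to the degree $-1$ self-equivalence of $S^0$; hence the generator $S^1\ra BGL_1(\SS)$ lifts through $J$, a concrete lift being the composite $\RP^1=S^1\hookrightarrow\RPi=BO(1)\ra BO$ classifying the M\"obius line bundle, whose Thom spectrum is the mod $2$ Moore spectrum $\SS/2$.

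Because $BO$ is grouplike and $\Omega^2 S^3$ is the free grouplike $E_2$-space on $S^1$, this lift extends uniquely to an $E_2$-map $\tilde\zeta\colon\Omega^2 S^3\ra BO$, and the same universal property forces $J\o\tilde\zeta\simeq\zeta$ as $E_2$-maps; thus $\tilde\zeta$ is a morphism of $E_2$-spaces over $BGL_1(\SS)$. Applying to $\tilde\zeta$ the Thom spectrum functor, which carries morphisms of $E_2$-spaces over $BGL_1(\SS)$ to morphisms of $E_2$-ring spectra and sends $\zeta$, $J$ to $H\F_2$, $MO$ respectively, yields the desired map of $E_2$-ring spectra
\[
H\F_2 = M\zeta \lra MJ = MO .
\]
The delicate point --- and the real content of the argument --- is the $E_2$-ring refinement of Mahowald's theorem, namely that the Thom spectrum of the $E_2$-map $\zeta$ is $H\F_2$ as an $E_2$-ring and not merely as an underlying spectrum; this uses that $\zeta$ is a genuine map of $E_2$-spaces together with the identification of $H_*(M\zeta;\F_2)$, as an algebra over the Steenrod (and Dyer--Lashof) operations, with $H_*(H\F_2;\F_2)$, which pins down the $E_2$-ring structure. (One does not expect this refinement to improve to an $E_\infty$-ring map, since Mahowald's Thom-spectrum description of $H\F_2$ lives only at the $E_2$-level.)
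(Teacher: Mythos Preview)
Your argument is correct and is essentially the paper's own proof, just spelled out with more care: both extend the generator $S^1\to BO$ of $\pi_1BO$ to a double-loop ($E_2$) map $\Omega^2S^3\to BO$, invoke Mahowald's identification of the Thom spectrum of $\Omega^2S^3\to BO\to BF$ with $H\F_2$, and then apply Lewis's result that Thomification of a double-loop map yields a map of $E_2$-ring spectra. Your additional remarks about $BGL_1(\SS)$, the explicit lift via $\RP^1\hookrightarrow\RP^\infty\to BO$, and the $E_2$-refinement of Mahowald's theorem are welcome elaborations but do not change the strategy.
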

\begin{proof}
The map $\alpha\:\SS^1 \ra BO$ that detects the
generator of the fundamental group of $BO$ gives rise
to a double-loop map
\begin{equation*}
\Omega^2\Sigma^2 \SS^1 = \Omega^2\SS^3 \ra BO.
\end{equation*}
As the Thom spectrum associated to $\Omega^2\SS^3$
is a model of $H\F_2$ by~\cite{M} with an $E_2$-structure \cite{LMS}, the claim follows.
\end{proof}

Generalizing an argument by Hu-Kriz-May~\cite{HKM}, Gilmour~\cite{G}
showed that there is no map of commutative $S$-algebras $H\F_2\ra MO$.

The $E_2$-structure on the map from Proposition~\ref{prop:map}
cannot be extended to $\xi_\R$. On the space level,
\begin{equation*}
H_*(\Omega\Sigma\RPi;\F_2) \cong T_{\F_2}(\bar{H}_*(\RPi;\F_2)),
\end{equation*}
where $H_n(\RPi;\F_2)$ is generated by an element $x_n$.

\begin{prop} \label{prop:noring}
There is no map of ring spectra $H\F_2 \ra M\xi_\R$.
\end{prop}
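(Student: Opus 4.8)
The plan is to run the argument of Proposition~\ref{prop:normaps} in mod~$2$ homology. By the (multiplicative) Thom isomorphism, $H_*(M\xi_\R;\F_2)$ is isomorphic as a ring to $H_*(\Omega\Sigma\RPi;\F_2)\cong T_{\F_2}(\bar{H}_*(\RPi;\F_2))$, the free associative $\F_2$-algebra on generators $x_n\in H_n$ for $n\geq1$; in particular $H_1(M\xi_\R;\F_2)=\F_2\{x_1\}$ is one-dimensional. On the other side, $H_*(H\F_2;\F_2)=\mathcal{A}_*$ is the dual Steenrod algebra, the polynomial algebra $\F_2[\zeta_1,\zeta_2,\ldots]$ with $|\zeta_r|=2^r-1$, so $\mathcal{A}_3$ has $\F_2$-basis $\{\zeta_1^3,\zeta_2\}$. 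Suppose $u\colon H\F_2\ra M\xi_\R$ is a map of ring spectra; then $u_*\colon\mathcal{A}_*\ra H_*(M\xi_\R;\F_2)$ is a ring homomorphism from a commutative ring into a free associative algebra, and the aim is to obstruct this in degrees $\leq3$.

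First I would show $u_*$ is injective. The composite of the $S$-algebra map $M\xi_\R\ra MO$ with the Thom class $MO\ra H\F_2$ is a ring map $t\colon M\xi_\R\ra H\F_2$, so $t\o u$ is a unit-preserving self-map of $H\F_2$; since $[H\F_2,H\F_2]=H^0(H\F_2;\F_2)=\F_2$, it must be the identity, hence $u_*$ is split injective. Now for the algebra: $u_*(\zeta_1)$ is a non-zero element of $H_1(M\xi_\R;\F_2)=\F_2\{x_1\}$, so $u_*(\zeta_1)=x_1$, and then $u_*(\zeta_2)$, which has degree $3$, must commute with $x_1$ because the image of $u_*$ is commutative. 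An easy computation in degree~$4$ of the free algebra (or a special case of Bergman's centralizer theorem) shows that the only degree-$3$ elements of $T_{\F_2}(x_1,x_2,\ldots)$ commuting with $x_1$ are the multiples of $x_1^3$: bracketing $a x_3+b x_1x_2+c x_2x_1+d x_1^3$ with $x_1$, the distinct monomials $x_1x_3$, $x_1^2x_2$, $x_2x_1^2$ force $a=b=c=0$. Hence $u_*(\zeta_2)$ is a multiple of $x_1^3=u_*(\zeta_1)^3=u_*(\zeta_1^3)$, and it is non-zero by injectivity, so $u_*(\zeta_2)=u_*(\zeta_1^3)$ even though $\zeta_2\neq\zeta_1^3$ in $\mathcal{A}_3$. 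This contradicts the injectivity of $u_*$, so no such $u$ exists.

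The step that needs care is the bookkeeping imported from the earlier sections: that the Thom isomorphism identifies $H_*(M\xi_\R;\F_2)$ with $T_{\F_2}(x_1,x_2,\ldots)$ \emph{as rings}, and that $u_*$ is a ring homomorphism, so that ``commutative image'' is a real constraint; after that the argument is elementary and short. One could also replace the retraction by the Thom class with an argument using the right $\mathcal{A}$-action on $H_*$, in the spirit of the proof of Proposition~\ref{prop:normaps}, to pin down the relevant classes $u_*(\zeta_1)$ and $u_*(\zeta_2)$.
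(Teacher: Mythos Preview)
Your proof is correct and shares the paper's overall architecture: force $u_*(\zeta_2)$ into the centralizer of $x_1$ in the free associative algebra, observe that this centralizer in degree~$3$ is just $\F_2\{x_1^3\}$, and derive a contradiction. The difference lies in how the two needed facts---$u_*(\zeta_1)\neq 0$ and $u_*(\zeta_2)\neq x_1^3$---are established. The paper uses the right Steenrod action: $\Sq^1_*$ shows $\gamma_*(\xi_1)=z_1$, $\Sq^2_*$ shows $\gamma_*(\xi_2)\neq 0$, and then $\Sq^1_*(\xi_2)=0$ versus $\Sq^1_*(z_1^3)=z_1^2$ rules out $\gamma_*(\xi_2)=z_1^3$. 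You instead produce a ring retraction $t\:M\xi_\R\to MO\to H\F_2$ via the Thom class, note that $t\circ u$ must be the identity since $[H\F_2,H\F_2]=\F_2$, and conclude $u_*$ is split injective; then $u_*(\zeta_2)=x_1^3=u_*(\zeta_1^3)$ directly contradicts injectivity.

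Your route is a little slicker in that it avoids any explicit Steenrod computations in $H_*(M\xi_\R;\F_2)$ (in particular the Thom-twisted action on the $z_i$), at the cost of invoking the Thom orientation $MO\to H\F_2$ as a ring map and the degree-zero Steenrod algebra. The paper's route stays closer to the template of Proposition~\ref{prop:normaps} and makes the obstruction visible at the level of individual operations, which you correctly flag as an alternative in your final remark. Either way the centralizer step is the heart of the argument, and your direct degree-$4$ check of it is fine (the paper simply asserts the stronger fact that the full centralizer of $z_1$ is $\F_2[z_1]$).
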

\begin{proof}
Assume $\gamma\: H\F_2 \ra M\xi_\R$ were a map of ring spectra.
We consider $\gamma_*\:(H\F_2)_*H\F_2 \ra (H\F_2)_*M\xi_\R$.
Note that $(H\F_2)_*M\xi_\R$ is the free associative $\F_2$-algebra
generated by $z_1,z_2,\ldots $ with $z_i$ in degree~$i$ being
the image of $x_i$ under the Thom-isomorphism.

Under the action of the Steenrod-algebra on $H\F_2$-homology
$\Sq_*^1(z_1) = 1$ and hence $\Sq_*^1(z_1^3) = z_1^2$ by the
derivation property of $\Sq_*^1$.

In the dual Steenrod algebra we have $\Sq_*^1(\xi_1) = 1$ and
$\Sq_*^2(\xi_2) = \xi_1$ and $\Sq_*^1(\xi_2) = 0$.

Combining these facts we obtain
\begin{equation}
\Sq_*^1(\gamma_*(\xi_1)) = \gamma_*(\Sq_*^1\xi_1) = \gamma_*(1) = 1,
\end{equation}
in particular $\gamma_*(\xi_1) \neq 0$ and thus $\gamma_*(\xi_1) = z_1$.

Similarly,
\begin{equation*}
\Sq_*^2\gamma_*(\xi_2) = \gamma_*(\Sq_*^2\xi_2) = \gamma_*\xi_1 = z_1
\neq 0.
\end{equation*}

The image of $\gamma_*$ generates a commutative sub-$\F_2$-algebra
of $(H\F_2)_*M\xi_\mathbb{R}$. The only elements in $(H\F_2)_*M\xi_\R$
that commute with $z_1$ are polynomials in $z_1$. Assume that
$\gamma_*\xi_2 = z_1^3$. Then
\begin{equation*}
0 = \gamma_*\Sq_*^1\xi_2 = \Sq_*^1(z_1^3) = z_1^2 \neq 0,
\end{equation*}
which is impossible. Therefore, $\gamma_*\xi_2$ does not
commute with $z_1$, so we get a contradiction.
\end{proof}

Note that Proposition~\ref{prop:noring} implies that there
is no loop map $\Omega^2\SS^3 \ra \Omega\Sigma \RPi$ that is compatible with the maps to $BO$
since such a map would induce a map of associative $S$-algebras
$H\F_2\ra M\xi_\R$.

A quaternionic model of quasisymmetric functions is given
by $H^*(\Omega \Sigma \HPi)$. Here, the algebraic generators
are concentrated in degrees that are divisible by~$4$. The
canonical map $\HPi = BSp(1) \ra BSp$ induces a loop-map
$\xi_\H\: \Omega \Sigma \HPi \ra BSp$ and thus gives rise
to a map of associative $S$-algebras on the level of Thom
spectra $M\xi_\H \ra MSp$.

Of course, the spectrum $MSp$ is not as well understood as
$MO$ and $MU$. There is a commutative $S$-algebra structure
on $MSp$~\cite[pp.~22,~76]{May}, but for instance the homotopy
groups of $MSp$ are not known in an explicit form.

\section{Associative versus commutative orientations}\label{sec:Assoc-vs-Comm}

We work with the second desuspension of the suspension spectrum
of $\CPi$. Such spectra are inclusion prespectra~\cite[X.4.1]{EKMM}
and thus a map of $S$-modules from $S = \Sigma^\infty\SS^0$ to
$\Sigma^{\infty -2}\CPi:=\Sigma^{-2}\Sigma^\infty \CPi$ is given
by a map from the zeroth space of the sphere spectrum to the zeroth
space of $\Sigma^{\infty -2}\CPi$ which in turn is a colimit, namely
$\colim_{\R^2 \subset W} \Omega^W \Sigma^{W-\R^2}\CPi$. As a map
$\varrho\: S \ra \Sigma^{\infty -2} \CPi$ we take the one that is
induced by the inclusion $\SS^2 = \CP^1\subset\CPi$.


The commutative $S$-algebra $S\wedge (\N_0)_+ = S[\N_0]$ has
a canonical map $ S[\N_0] \ra S$ which is given by the fold
map. We can model this via the map of monoids that sends the
additive monoid $(\N_0,0,+)$ to the monoid $(0,0,+)$; thus
$S[\N_0] \ra S$ is a map of commutative $S$-algebras.

We get a map $S[\N_0] \ra \A(\Sigma^{\infty-2}\CPi)$ by taking
the following map on the $n$th copy of $S$ in $S[\N_0]$. We can
view $S$ as $S \wedge \{*\}_+$ where $\{*\}$ is a one-point space.
The $n$-fold space diagonal gives a map
\begin{equation*}
\delta_n\: S = S\wedge \{*\}_+ \ra
   S\wedge \{(\underbrace{*,\ldots,*)}_{n}\}_+
      \simeq S^{\wedge n}
\end{equation*}
which fixes an equivalence of $S$ with $S^{\wedge n}$. We compose
this map with the $n$-fold smash product of the map
$\varrho\: S \ra \Sigma^{\infty-2}\CPi$. The maps
\begin{equation*}
\varrho^{\wedge n} \circ \delta_n\: S \ra
(\Sigma^{\infty-2}\CPi)^{\wedge n} \ra \A(\Sigma^{\infty-2}\CPi)
\end{equation*}
together give a map of $S$-algebras
\begin{equation*}
\tau\: S[\N_0] \ra \A(\Sigma^{\infty-2}\CPi).
\end{equation*}

Note, however, that $S[\N_0]$ is not central in $\A(\Sigma^{\infty-2}\CPi)$.
Thus the coequalizer
\begin{equation*}
S \wedge_{S[\N_0]} \A(\Sigma^{\infty-2}\CPi)
\end{equation*}
does not possess any obvious $S$-algebra structure. Furthermore,
there is a natural map
\begin{equation*}
S \wedge_{S[\N_0]} \A(\Sigma^{\infty-2}\CPi) \ra M\xi,
\end{equation*}
but this is not a weak equivalence since the $H\Z$-homology of
the left-hand side is the quotient by the left ideal generated
by $z_0-1$ and thus it is bigger than $H\Z_*M\xi$ which is the
quotient by the two-sided ideal generated by $z_0-1$.

In the commutative context the pushout of commutative $S$-algebras
is given by the smash product. Hence there is a natural morphism
of commutative $S$-algebras
\begin{equation*}
\tilde{\P}(\Sigma^{\infty-2}\CPi) =
 S \wedge_{\P(\Sc)} \P(\Sigma^{\infty-2}\CPi) \ra MU,
\end{equation*}
where $\tilde{\P}(\Sigma^{\infty-2}\CPi)$ is the pushout
in the following diagram of commutative $S$-algebras
\begin{equation*}
\xymatrix{
\P(\Sc) \ar[r]  \ar[d]
\ar@{}[dr]|<<<<<<<<{\text{\Large$\ulcorner$}}
       & \P(\Sigma^{\infty-2}\CPi) \ar[d] \\
S \ar[r] & \tilde{\P}(\Sigma^{\infty-2}\CPi).
}
\end{equation*}
Here, we use the identity map on $S$ to induce the left-hand
vertical map of commutative $S$-algebras and the inclusion
of the bottom cell of $\Sigma^{\infty-2}\CPi$ to induce the
top map which is a cofibration and therefore
$\tilde{\P}(\Sigma^{\infty-2}\CPi)$ is cofibrant. However,
the map $\tilde{\P}(\Sigma^{\infty-2}\CPi) \ra MU$ is not
a weak equivalence as the next result shows.

\begin{lem}\label{lem:rat-eqce-MU}
The canonical map of commutative $S$-algebras
\begin{equation*}
\tilde{\P}(\Sigma^{\infty-2}\CPi) \ra MU
\end{equation*}
is an equivalence rationally, but not globally. Furthermore,
there is a morphism of ring spectra
\begin{equation*}
MU \ra \tilde{\P}(\Sigma^{\infty-2}\CPi)
\end{equation*}
which turns $MU$ into a retract of\/ $\tilde{\P}(\Sigma^{\infty-2}\CPi)$.
\end{lem}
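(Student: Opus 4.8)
The plan is to establish the three claims separately. \emph{First, the rational equivalence.} Rationally the free commutative $S$-algebra functor computes the free graded-commutative $\Q$-algebra on homotopy, and a pushout of commutative $S$-algebras computes a derived tensor product. Since $\pi_*(\Sigma^{\infty-2}\CPi)\otimes\Q$ is a copy of $\Q$ in each degree $2i$, $i\geq0$, with the degree-$0$ class $b_0$ hit by $\varrho$, one gets $\pi_*\P(S)\otimes\Q\iso\Q[c]$ with $c$ in degree $0$ mapping to $b_0$ on one side and to $1$ on the other, hence $\pi_*\tilde{\P}(\Sigma^{\infty-2}\CPi)\otimes\Q\iso\Q[b_0,b_1,\dots]/(b_0-1)\iso\Q[b_1,b_2,\dots]$ with $|b_i|=2i$ (the relevant $\mathrm{Tor}$ vanishes because $b_0-1$ is a non-zero-divisor). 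As $\pi_*\MU\otimes\Q$ is polynomial on one generator in each positive even degree, it suffices to check the map is onto indecomposables degreewise. By naturality of the Hurewicz homomorphism the class $b_i$ maps to an element of $\pi_{2i}\MU\otimes\Q$ whose Hurewicz image is the standard generator $b_i\in H_*(\MU;\Q)=\Q[b_1,b_2,\dots]$, and by Milnor's computation the indecomposables of $\pi_*\MU\otimes\Q$ inject onto the span of the $b_i$. So the map is an isomorphism on indecomposables in each degree, and a surjection between free graded-commutative $\Q$-algebras on degreewise-finite generating spaces of equal dimension is an isomorphism.

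\emph{Failure of a global equivalence.} Here I would count mod-$2$ homology. The mod-$2$ homology of a free commutative $S$-algebra is the free allowable algebra over the Dyer--Lashof algebra, so $H_*(\P(\Sigma^{\infty-2}\CPi);\F_2)$ contains $Q^3 b_1$ in degree $5$, where $b_1$ is the degree-$2$ generator. Forming $S\Smash_{\P(S)}(-)$ only kills the polynomial subalgebra generated by $b_0$ and its Dyer--Lashof operations (the Dyer--Lashof operations on the unit vanish in positive degrees), so $Q^3 b_1$ survives and $H_5(\tilde{\P}(\Sigma^{\infty-2}\CPi);\F_2)\neq0$. But $H_*(\MU;\F_2)=\F_2[b_1,b_2,\dots]$ with $|b_i|=2i$ is concentrated in even degrees, so the canonical map cannot be a weak equivalence. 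Odd primes are handled the same way.

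\emph{The retract.} The commutative $S$-algebra $\tilde{\P}(\Sigma^{\infty-2}\CPi)$ is in particular homotopy-commutative, and it is complex-orientable: the canonical map $\Sigma^{\infty-2}\CPi\to\tilde{\P}(\Sigma^{\infty-2}\CPi)$ restricts on the bottom cell $\SS^2$ to the unit — that is how the defining pushout is set up, since $\varrho$ is sent to the unit — so it defines a complex orientation $x_{\mathrm{univ}}$, and by construction the canonical map $q\:\tilde{\P}(\Sigma^{\infty-2}\CPi)\to\MU$ carries $x_{\mathrm{univ}}$ to the standard orientation $x_\MU$. By Quillen's universality of $\MU$ among complex-oriented homotopy-commutative ring spectra there is a map of ring spectra $s\:\MU\to\tilde{\P}(\Sigma^{\infty-2}\CPi)$ realizing $x_{\mathrm{univ}}$, so the composite $g=q\o s\:\MU\to\MU$ is a ring map realizing $x_\MU$. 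Hence the formal group law $g$ induces on $\pi_*\MU$ is the standard one $F_\MU$; as this is also the image of $F_\MU$ under $\pi_*g$ and the coefficients of $F_\MU$ generate $\pi_*\MU$ as a ring (Lazard), $\pi_*g=\id$ and $g$ is a weak equivalence. Its homotopy inverse $g^{-1}$ is again a homotopy ring map, so $s\o g^{-1}\:\MU\to\tilde{\P}(\Sigma^{\infty-2}\CPi)$ is a map of ring spectra with $q\o(s\o g^{-1})\simeq\id_\MU$, exhibiting $\MU$ as a retract.

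\emph{Main obstacle.} The substance is in the last part, and precisely in the assertion that $g=q\o s$ is an equivalence: this rests on the orientation bookkeeping (that $q$ sends $x_{\mathrm{univ}}$ to $x_\MU$, and that $s$ can be normalized to realize $x_{\mathrm{univ}}$) together with the rigidity of the Lazard ring. The rational statement is routine once the derived pushout is written out, and the non-equivalence is a one-line homology count.
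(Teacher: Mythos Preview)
Your proposal is correct and takes essentially the same approach as the paper: a K\"unneth/Tor computation over $\Q$ for the rational statement, the presence of odd-degree classes in mod-$p$ homology for the non-equivalence (the paper phrases this via the collapsed K\"unneth spectral sequence rather than naming an explicit Dyer--Lashof class, but the content is the same), and the complex orientation on $\tilde{\P}(\Sigma^{\infty-2}\CPi)$ together with Quillen's universality for the retract. One small simplification in the last part: since homotopy classes of ring maps $MU\to MU$ are determined by the induced ring endomorphism of $\pi_*MU$, your conclusion $\pi_*g=\id$ already gives $g\simeq\id_{MU}$, so the detour through $g^{-1}$ is unnecessary---the paper simply notes that $\theta\circ\phi$ preserves the canonical orientation $\sigma$ and concludes $\theta\circ\phi\simeq\id_{MU}$ directly.
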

\begin{proof}
Let $\k$ be a field. The K\"unneth spectral sequence for
the homotopy groups of
\begin{equation*}
H\k \wedge (\tilde{\P}(\Sigma^{\infty-2}\CPi))
\simeq
  H\k\wedge_{\P_{H\k}(H\k)} \P_{Hk}(\Sigma^{-2}H\k\wedge\CPi)
\end{equation*}
has $\mathrm{E}^2$-term
\begin{equation*}
\mathrm{E}^2_{*,*} =
\Tor_{*,*}^{\pi_*(\P_{H\k}(H\k))}(\k,\pi_*(\P_{H\k}(\Sigma^{-2}H\k\wedge\CPi))).
\end{equation*}
When $\k = \Q$, $\pi_*(\P_{H\Q}(H\Q))$ is a polynomial algebra
on a zero-dimensional class $x_0$ and
\begin{equation} \label{eq:htp-free}
\pi_*(\P_{H\Q}(\Sigma^{-2}H\Q\wedge\CPi)) \cong \Q[x_0,x_1,\ldots],
\end{equation}
where $|x_i|=2i$. Thus
\begin{equation*}
\pi_*(H\Q \wedge (\widetilde{\P}(\Sigma^{\infty-2}\CPi))
  \cong \Q[x_1,x_2,\ldots] \cong H\Q_*(MU).
\end{equation*}

However, when $\k=\F_p$ for a prime $p$, the freeness of
the commutative $S$-algebras $\P(S)$ and
$\P(\Sigma^{\infty-2}\CPi))$ implies that
$(H\F_p)_*(\P(\Sigma^{\infty-2}\CPi))$ is a free
$(H\F_p)_*(\P(S))$-module and thus the $\mathrm{E}^2$-term
reduces to the tensor product in homological degree zero.
Note that this tensor product contains elements of odd
degree, but $(H\F_p)_*(MU)$ doesn't.

Using the orientation for line bundles given by the canonical
inclusion
\begin{equation*}
\Sigma^{\infty-2}\CPi\ra \widetilde{\P}(\Sigma^{\infty-2}\CPi),
\end{equation*}
we have a map of ring spectra
\begin{equation*}
\phi \: MU \ra \widetilde{\P}(\Sigma^{\infty-2}\CPi).
\end{equation*}

The inclusion map $\CPi = BU(1) \ra BU$ gives rise to the
canonical map $\sigma\: \Sigma^{\infty-2}\CPi \ra MU$
and with this orientation we get a morphism of commutative
$S$-algebras
\begin{equation*}
\theta\: \widetilde{\P}(\Sigma^{\infty-2}\CPi)\ra MU,
\end{equation*}
such that the composite $\theta\circ\phi\circ\sigma$ agrees
with $\sigma$, hence $\theta\circ\varphi$ is homotopic to
the identity on $MU$.
\end{proof}

Using topological Andr\'e-Quillen homology, $\TAQ_*(-)$,
we can show that the map of ring spectra $\phi$ cannot
be rigidified to a map $\tilde{\phi}$ of commutative
$S$-algebras in such a way that the composite
$\theta\circ\tilde{\phi}$ is a weak-equivalence. By
Basterra-Mandell~\cite{BM},
\begin{equation*}
\TAQ_*(MU|S;H\F_p) \cong (H\F_p)_*(\Sigma^2 ku),
\end{equation*}
while \cite[proposition~1.6]{BGR} together with subsequent
work of the first named author
\cite{Ba:TAQI}
gives
\begin{equation*}
\TAQ_*(\widetilde{\P}(\Sigma^{\infty-2}\CPi)|S;H\F_p)
    \cong (H\F_p)_*(\Sigma^{\infty-2}\CPi_2),
\end{equation*}
where $\CPi_2 = \CPi/\CP^1$ is the cofiber of the inclusion
of the bottom cell.
\begin{prop}\label{prop:MU->PCP}
For a prime $p$, there can be no morphism of commutative
$S_{(p)}$-algebras
\begin{equation*}
\theta_{(p)}\:MU_{(p)}\ra(\tilde{\mathbb{P}}\Sigma^{\infty-2}\CPi)_{(p)}
\end{equation*}
for which $\sigma_{(p)}\circ\theta_{(p)}$ is a weak equivalence.
Hence there can be no morphism of commutative $S$-algebras
\begin{equation*}
\theta\:MU\lra\tilde{\mathbb{P}}\Sigma^{\infty-2}\CPi
\end{equation*}
for which $\sigma\circ\theta$ is a weak equivalence.
\end{prop}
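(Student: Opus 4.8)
The plan is to obstruct such a $\theta_{(p)}$ with topological André--Quillen homology, exactly as the discussion preceding the statement suggests. Recall that $\TAQ_*(-|S;H\F_p)$ is a covariant functor on commutative $S$-algebras augmented over $H\F_p$ (with augmentation-preserving morphisms) which carries weak equivalences to isomorphisms. Suppose, for a contradiction, that a commutative $S_{(p)}$-algebra map $\theta_{(p)}\colon MU_{(p)}\to(\tilde\P\Sigma^{\infty-2}\CPi)_{(p)}$ exists whose composite with the canonical commutative $S_{(p)}$-algebra map $c\colon(\tilde\P\Sigma^{\infty-2}\CPi)_{(p)}\to MU_{(p)}$ of Lemma~\ref{lem:rat-eqce-MU} is a weak equivalence; then $MU_{(p)}$ is a retract of $(\tilde\P\Sigma^{\infty-2}\CPi)_{(p)}$ among commutative $S_{(p)}$-algebras. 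Taking the mod-$p$ Thom class $\varepsilon\colon MU\to H\F_p$ as the augmentation of $MU$ and $\varepsilon\circ c$ as that of $\tilde\P\Sigma^{\infty-2}\CPi$, both $c$ and $\theta_{(p)}$ become augmentation-preserving (for $\theta_{(p)}$ one uses that $c\circ\theta_{(p)}$ is a ring-map weak equivalence and that $H^0(MU;\F_p)=\F_p$). Applying $\TAQ_*(-|S;H\F_p)$ then yields $\F_p$-linear maps
\[
\TAQ_*(MU|S;H\F_p)\longrightarrow\TAQ_*(\tilde\P\Sigma^{\infty-2}\CPi\,|\,S;H\F_p)\longrightarrow\TAQ_*(MU|S;H\F_p)
\]
with composite an isomorphism, so $\TAQ_*(MU|S;H\F_p)$ is a graded direct summand of $\TAQ_*(\tilde\P\Sigma^{\infty-2}\CPi\,|\,S;H\F_p)$.

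It then remains to contradict this by a parity count on the two groups already recorded. By \cite{BGR} and \cite{Ba:TAQI},
\[
\TAQ_*(\tilde\P\Sigma^{\infty-2}\CPi\,|\,S;H\F_p)\cong(H\F_p)_*(\Sigma^{\infty-2}\CPi_2)=\tilde H_{*+2}(\CPi/\CP^1;\F_p),
\]
which vanishes in odd degrees since the cells of $\CPi/\CP^1$ lie only in dimensions $4,6,8,\ldots$\,. By Basterra--Mandell~\cite{BM},
\[
\TAQ_*(MU|S;H\F_p)\cong(H\F_p)_*(\Sigma^2 ku)=H_{*-2}(ku;\F_p),
\]
and $H_*(ku;\F_p)$ is nonzero in infinitely many odd degrees: for $p$ odd this follows from the Adams splitting $ku_{(p)}\simeq\bigvee_{i=0}^{p-2}\Sigma^{2i}\bp1$ together with $H_*(\bp1;\F_p)\cong\F_p[\xi_1,\xi_2,\ldots]\otimes\Lambda(\tau_2,\tau_3,\ldots)$, whose exterior generators $\tau_k$ sit in the odd degrees $2p^k-1$; for $p=2$ it follows from the fact that $H_*(ku;\F_2)$ is polynomial with generators in degrees $2,6$ and $2^r-1$ for $r\geq3$. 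An evenly graded $\F_p$-module has no oddly graded direct summand, giving the desired contradiction and hence the $p$-local statement. The integral statement is immediate: a global morphism of commutative $S$-algebras $MU\to\tilde\P\Sigma^{\infty-2}\CPi$ with the stated property would $p$-localize to one of the kind just excluded.

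The substance of the argument is the passage from ``the map $\tilde\P\Sigma^{\infty-2}\CPi\to MU$ is not a weak equivalence'' --- which the K\"unneth spectral sequence computation in the proof of Lemma~\ref{lem:rat-eqce-MU} already gives, again via a parity obstruction --- to ``$MU$ is not even a retract of $\tilde\P\Sigma^{\infty-2}\CPi$ through maps of commutative $S$-algebras''; this is precisely the extra strength $\TAQ$ provides, since ordinary $(H\F_p)_*$ of a retract diagram need not remember the obstruction. Consequently the one point that genuinely needs care is the functoriality bookkeeping: checking that the hypothesized retraction is a retraction in a category on which $\TAQ_*(-|S;H\F_p)$ is a functor, so that the induced splitting runs in the correct direction (with $\TAQ_*(MU|S;H\F_p)$ the summand). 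I would settle this using the transitivity cofibre sequence for the relative cotangent complex and base change along the chosen augmentations, rather than by any hands-on construction.
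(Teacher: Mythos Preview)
Your argument is correct and follows essentially the same route as the paper: both assume a commutative $S$-algebra retraction, apply $\TAQ_*(-|S;H\F_p)$ using the Basterra--Mandell identification $\TAQ_*(MU|S;H\F_p)\cong(H\F_p)_*(\Sigma^2 ku)$ and the computation $\TAQ_*(\tilde\P\Sigma^{\infty-2}\CPi|S;H\F_p)\cong(H\F_p)_*(\Sigma^{\infty-2}\CPi_2)$, and then derive a contradiction from the presence of odd-degree classes in the former but not the latter. The paper phrases the functoriality step via naturality of the K\"ahler differentials functor $\Omega_S$ rather than through augmentations, but the content is the same.
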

\begin{proof}
It suffices to prove the first result, and we will assume that
all spectra are localised at~$p$. Assume such a morphism $\theta$
existed. Then by naturality of the functor of K\"ahler differentials,
$\Omega_S$, there are (derived) morphisms of $MU$-modules and
a commutative diagram
\begin{equation*}
\xymatrix{
\Omega_{S}(MU) \ar[r]_(.4){\theta_*} \ar@/^19pt/[rr]^{\sim}
& \Omega_{S}(\tilde{\mathbb{P}}\Sigma^{\infty-2}\CPi)\ar[r]_(.6){\sigma_*}
& \Omega_{S}(MU)
}
\end{equation*}
which by~\cite{BM} 
induce a commutative diagram in $\TAQ_*(-;H\F_p)$
of the following form:
\begin{equation*}
\xymatrix{
H_*(\Sigma^2ku;\F_p) \ar[r]_(.43){\theta_*} \ar@/^19pt/[rr]^{\iso}
& H_*(\Sigma^{\infty-2}\CPi_2;\F_p)\ar[r]_(.56){\sigma_*}
& H_*(\Sigma^2ku;\F_p).
}
\end{equation*}
It is standard that
\begin{equation*}
H_n(\Sigma^{\infty-2}\CPi_2;\F_p) =
\begin{cases}
\F_p & \text{if $n\geq2$ and is even}, \\
\;0 & \text{otherwise}.
\end{cases}
\end{equation*}
On the other hand, when $p=2$,
\begin{equation*}
H_*(ku;\F_2) =
\F_2[\zeta_1^2,\zeta_2^2,\zeta_3,\zeta_4,\ldots]
                           \subseteq\mathcal{A}(2)_*
\end{equation*}
with $|\zeta_s|=2^s-1$, while when $p$ is odd
\begin{equation*}
\Sigma^2ku_{(p)} \sim \bigvee_{1\leq r\leq p-1}\Sigma^{2r}\ell,
\end{equation*}
where $\ell$ is the Adams summand with
\begin{equation*}
H_*(\ell;\F_2) =
\F_p[\zeta_1,\zeta_2,\zeta_3,\ldots]\otimes\Lambda(\bar{\tau}_r:r\geq2),
\end{equation*}
for $|\zeta_s|=2p^s-2$ and $|\bar{\tau}_s|=2p^s-1$.
Hence no such $\theta$ can exist.
\end{proof}
\begin{prop}\label{prop:MU-notuniv}
There are commutative $S$-algebras $E$ which possess
a map of commutative $S$-algebras
\begin{equation*}
\tilde{\P}(\Sigma^{\infty-2}\CPi) \ra E
\end{equation*}
that cannot be extended to a map of commutative $S$-algebras
$MU\ra E$.
\end{prop}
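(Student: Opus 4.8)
The plan is to take for $E$ the commutative $S$-algebra $\tilde{\P}(\Sigma^{\infty-2}\CPi)$ itself, equipped with the identity map $\id\colon\tilde{\P}(\Sigma^{\infty-2}\CPi)\ra E$, and to show that this morphism of commutative $S$-algebras does not extend to a morphism of commutative $S$-algebras $MU\ra E$ along the canonical map $\theta\colon\tilde{\P}(\Sigma^{\infty-2}\CPi)\ra MU$ of Lemma~\ref{lem:rat-eqce-MU}. So suppose, for a contradiction, that there were a morphism of commutative $S$-algebras $g\colon MU\ra\tilde{\P}(\Sigma^{\infty-2}\CPi)$ with $g\o\theta\simeq\id$. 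Recall that, by construction of $\theta$ from the universal property of $\tilde{\P}(\Sigma^{\infty-2}\CPi)$, the composite $\theta\o\iota$ is homotopic to the canonical complex orientation $\sigma\colon\Sigma^{\infty-2}\CPi\ra MU$, where $\iota\colon\Sigma^{\infty-2}\CPi\ra\tilde{\P}(\Sigma^{\infty-2}\CPi)$ denotes the canonical inclusion.

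Next I would examine the composite $\theta\o g\colon MU\ra MU$. It is a morphism of commutative $S$-algebras, hence in particular of homotopy-commutative ring spectra, and
\[
(\theta\o g)\o\sigma \simeq (\theta\o g)\o\theta\o\iota \simeq \theta\o(g\o\theta)\o\iota \simeq \theta\o\iota \simeq \sigma,
\]
so that $\theta\o g$ carries the universal complex orientation of $MU$ to itself. Arguing exactly as in the proof of Lemma~\ref{lem:rat-eqce-MU}, where the same reasoning was applied to the composite $\theta\o\phi$, the universal property of $MU$ among homotopy-commutative ring spectra then forces $\theta\o g\simeq\id_{MU}$; in particular $\theta\o g$ is a weak equivalence.

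This is the contradiction: $g$ is a morphism of commutative $S$-algebras $MU\ra\tilde{\P}(\Sigma^{\infty-2}\CPi)$ whose composite with $\theta$ is a weak equivalence, which Proposition~\ref{prop:MU->PCP} forbids. Hence $\id$ does not extend over $MU$ and $E=\tilde{\P}(\Sigma^{\infty-2}\CPi)$ is an example of the asserted kind. I expect the one point needing care to be the appeal to the universal property of $MU$ in the second step: one must make sure that the statement being used --- a map of ring spectra $MU\ra MU$ preserving the complex orientation is homotopic to the identity --- is available in the homotopy category in which $\theta\o g$ actually lives, namely that of (homotopy-)commutative ring spectra, and not merely in a more rigid $E_\infty$ setting; this is just the classical orientation/formal-group-law dictionary and is precisely the input already used in Lemma~\ref{lem:rat-eqce-MU}. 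A subsidiary point is that ``extending over $MU$'' is here understood up to homotopy.
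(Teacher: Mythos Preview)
Your argument is correct and takes a genuinely different route from the paper's own proof. The paper exhibits an \emph{external} example: it takes $E=E_n$ a Lubin--Tate spectrum and uses the Johnson--No\"el result that the $p$-typical orientation of $E_n$ is not $H_\infty$ (for small primes); the corresponding commutative $S$-algebra map $\tilde{\P}(\Sigma^{\infty-2}\CPi)\to E_n$ then cannot be extended, since any $E_\infty$ extension $MU\to E_n$ would, by Quillen's universal property, have to be the $p$-typical ring map, which is not $H_\infty$. Your proof is \emph{internal}: you take $E=\tilde{\P}(\Sigma^{\infty-2}\CPi)$ with the identity map, and feed the hypothetical extension $g$ into Proposition~\ref{prop:MU->PCP} after checking via the orientation dictionary that $\theta\circ g$ must be a self-equivalence of $MU$. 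Both arguments hinge on the same classical fact (a homotopy-ring map out of $MU$ is determined by the induced orientation), but yours avoids the appeal to Johnson--No\"el and instead leverages the $\TAQ$ calculation already done in the paper; the paper's version, on the other hand, has the virtue of producing a concrete, well-known target spectrum and of being logically independent of Proposition~\ref{prop:MU->PCP}. Your caveats at the end are apt and correctly handled: the universal property used is the homotopy-category one, and ``extension'' is up to homotopy, exactly as in Lemma~\ref{lem:rat-eqce-MU}.
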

\begin{proof}
Matthew Ando~\cite{ando} constructed complex orientations
for the Lubin-Tate spectra $E_n$ which are $H_\infty$-maps
$MU\ra E_n$. However, in~\cite{JN}, Niles Johnson and Justin
No\"el showed that none of these are $p$-typical for all
primes up to at least~$13$ (and subsequently verified for
primes up to $61$). For any $p$-typical orientation there
is a map of ring spectra $MU \ra E_n$, but this map cannot
be an $H_\infty$-map and therefore is not a map of commutative
$S$-algebras.
\end{proof}

\bigskip

\end{document}